\theoremstyle{plain}
\newtheorem{theorem}{Theorem}[section]
\newtheorem{lemma}[theorem]{Lemma}
\newtheorem{cor}[theorem]{Corollary}
\newtheorem{prop}[theorem]{Proposition}
\newtheorem*{lemma*}{Lemma}
\newtheorem*{cor*}{Corollary}
\newtheorem*{theorem*}{Theorem}
\theoremstyle{definition}
\newtheorem{problem*}{Problem}
\newtheorem{definition}[theorem]{Definition}
\theoremstyle{remark}
\newtheorem*{fact*}{Fact}
\newtheorem*{remark}{Remark}
\newtheorem*{question}{Question}
\let\oldproofname=\proofname
\renewcommand{\proofname}{\rm\bf{\oldproofname}}
\newcommand{\R}{\mathbb R}
\newcommand{\Q}{\mathbb Q}
\newcommand{\Z}{\mathbb Z}
\newcommand{\C}{\mathbb C}
\newcommand{\F}{\mathbb F}
\newcommand{\RP}{\mathbb R\text{P}}
\newcommand{\phii}{\varphi}
\newcommand{\ep}{\varepsilon}
\providecommand*{\twoheadrightarrowfill@}{%
  \arrowfill@\relbar\relbar\twoheadrightarrow
}
\providecommand*{\twoheadleftarrowfill@}{%
  \arrowfill@\twoheadleftarrow\relbar\relbar
}
\providecommand*{\xtwoheadrightarrow}[2][]{%
  \ext@arrow 0579\twoheadrightarrowfill@{#1}{#2}%
}
\providecommand*{\xtwoheadleftarrow}[2][]{%
  \ext@arrow 5097\twoheadleftarrowfill@{#1}{#2}%
}
\newcommand{\op}[1]{\operatorname{{#1}}}
\newcommand{\mc}[1]{\mathcal{{#1}}}
\begin{document}
    \title{The Rational Homotopy of Stable $C_p$-Smoothings}
    \author{Oliver H. Wang}
    \address{Department of Mathematics, The University of Virginia, Charlottesville, VA 22904, USA}
\email{dfh3fs@virginia.edu}
	\maketitle

\begin{abstract}
Smooth structures on high dimensional manifolds are classified by maps to the infinite loop space $TOP/O$.
The homotopy groups of this space are known to be finite.
Given a compact Lie group $G$, this space can be regarded as an equivariant infinite loop space and equivariant maps from a locally linear, high dimensional $G$-manifold to $TOP/O$ classify stable $G$-smoothings.
We compute the equivariant homotopy groups $\pi_V^{C_p}TOP/O\otimes\Q$ where $C_p$ denotes the cyclic group of order $p$.
By applying our methods to the group $C_4$, we prove a Chern class analogue of Novikov's theorem on rational Pontryagin classes.
\end{abstract}

\section{Introduction}
Kirby--Siebenmann \cite{KirbySiebenmann} show that the infinite loop space $TOP/ O$ classifies smooth structures on high dimensional manifolds.
An application of Kervaire--Milnor's theorem on the finiteness of the group of homotopy spheres in high dimensions \cite{KervaireMilnor} shows that the homotopy groups of $TOP/ O$ are finite in dimensions at least $5$.
A separate analysis of the low dimension homotopy groups shows that $\pi_iTOP/ O=0$ for $i=0,1,2,4$ and $\pi_3 TOP/ O\cong\Z/2$.
Thus $\pi_i TOP/ O$ is finite.

This result has very interesting consequences.
First, an application of obstruction theory shows that $[X, TOP/ O]$ is finite for any finite CW-complex $X$.
Hence any compact high dimensional manifold has only finitely many smooth structures.
Additionally, the rational triviality of $TOP/O$ implies that $BO$ is rationally equivalent to $BTOP$.
This recovers Novikov's famous result that the rational Pontryagin classes of a manifold are topological invariants.

In this paper, we consider $ TOP/O$ as a $C_p$-equivariant infinite loop space and we determine the groups $[S^V, TOP/ O]^{C_p}$ rationally, where $V$ is a $C_p$-representation and $S^V$ denotes the representation sphere.
Equivariant homotopy classes of maps from a $G$-manifold to $ TOP/O$ also admit a geometric description which we review below.

\subsection{$G$-Smoothing Theory}
In \cite{LashofRothenberg}, Lashof and Rothenberg develop smoothing theory for $G$-manifolds.
We summarize their results here.

\begin{definition}\label{def: isotopy of smooth structures}
Let $X$ be a $G$-manifold.
A \emph{$G$-smoothing of $X$} is a pair $(Y,f)$ where $Y$ is a smooth $G$-manifold and $f:Y\rightarrow X$ is a $G$-homeomorphism.
Two $G$-smoothings $(Y_i,f_i)$, $i=0,1$ are \emph{isotopic} if there is a $G$-homeomorphism $\alpha:Y_0\times I\rightarrow X$ such that the following hold:
\begin{itemize}
\item For $t\in I$, $\alpha(-,t)$ is a $G$-homeomorphism,
\item $\alpha(-,0)=f_0$ and $f_1^{-1}\circ\alpha(-,1):Y_0\rightarrow Y_1$ is a $G$-diffeomorphism.
\end{itemize}
In this definition, $Y_0\times I$ denotes the product smooth $G$-manifold.
\end{definition}

\begin{remark}
Given two smooth $G$-manifolds $X$ and $Y$, their product $X\times Y$ can be given the structure of a smooth $G$-manifold in the standard way; as a smooth manifold it is just the product and $G$ acts diagonally.
In this case, we will say that $X\times Y$ is the product smooth $G$-manifold.
A very important subtlety in the theory of smooth $G$-manifolds is that there are smooth $G$-manifolds which are $G$-homeomorphic to $X\times I$ but which are not diffeomorphic to a product $Y\times I$ for any smooth $G$-manifold $Y$.
Therefore, it is important to specify in definition \ref{def: isotopy of smooth structures} above that $Y_0\times I$ is the product smooth $G$-manifold.
\end{remark}

Let $V$ be a finite dimensional $G$-representation.
Let $ TOP_G(V)$ denote the homeomorphisms of $V$ commuting with the $G$-action.
Similarly, let $ O_G(V)$ denote the orthogonal transformations of $V$ commuting with the $G$-action.
There is a $G$-space $BO_n(G)$ such classifying $n$-dimensional $G$-vector bundles.
This space has the property that
\[
BO_n(G)^H=\coprod_{V}BO_H(V)
\]
where the disjoint union is indexed by the $n$-dimensional $H$-representations.

There is also a $G$-space $B TOP_n(G)$ classifying $n$-dimensional $G$-microbundles.
The fixed sets are
\[
B TOP_n(G)^H=\coprod_{V}B TOP_H(V)
\]
where the disjoint union is indexed by $ TOP$-equivalence classes of $H$-representations.

\begin{remark}
When $H$ has odd order, \cite{MadsenRothenberg1} and \cite{HsiangPardon} show that $H$-representations are topologically equivalent if and only if they are isomorphic.
In particular, $BO_n(G)^H\rightarrow B TOP_n(G)^H$ is bijective on components if $H$ has odd order.
\end{remark}

Given an $n$-dimensional locally linear $G$-manifold $X$, one may take the tangent $G$-microbundle as one does in the non-equivariant case.
This is classified by a $G$-map $\tau:X\rightarrow B TOP_n(G)$.
\begin{theorem}\label{thm: Lashof--Rothenberg G-smoothing}
Let $X$ be an $n$-dimensional locally linear $G$-manifold such that $\dim X^H\neq4$ for any subgroup $H\le G$.
Then, isotopy classes of $G$-smoothings of $X$ are classified by $G$-homotopy classes of lifts
\[
\begin{tikzpicture}[scale=2]
\node (A) at (2,1) {$BO_n(G)$};
\node (B) at (0,0) {$X$};\node (C) at (2,0) {$BTOP_n(G)$};
\path[->] (A) edge (C) (B) edge node[above]{$\tau$} (C) (B) edge[dashed] (A);
\end{tikzpicture}.
\]
\end{theorem}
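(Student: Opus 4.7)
The plan is to follow the strategy of Lashof--Rothenberg, which is the equivariant analogue of Kirby--Siebenmann smoothing theory. One direction is tautological: given a $G$-smoothing $(Y,f)$, the smooth tangent $G$-vector bundle of $Y$ provides a lift of $\tau$ through $BO_n(G)$, and isotopic smoothings yield $G$-homotopic lifts. All the content is in the reverse direction, namely producing a $G$-smoothing from a lift of $\tau$, and showing the assignment is well-defined up to isotopy.

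To produce the smoothing, I would proceed by induction over an equivariant handle decomposition of $X$. By the equivariant slice theorem and work on locally linear $G$-manifolds, $X$ admits a decomposition into equivariant handles of the form $G\times_H (D(V)\times D(W))$ where $V,W$ are $H$-representations. On each orbit stratum the underlying handle is a non-equivariant topological handle whose fixed-set pieces $(G\times_H(D(V)\times D(W)))^K$ are products of disks in linear subspaces; the hypothesis $\dim X^H\neq 4$ guarantees that every fixed-set stratum has dimension different from $4$, so non-equivariant Kirby--Siebenmann smoothing theory can be invoked stratum by stratum. The inductive step is to show that, given a smoothing of the handlebody $X_{k-1}$ built so far together with an extension of the lift across a new handle $h_k$, one can extend the smoothing across $h_k$ compatibly. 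This is the equivariant analogue of the Kirby--Siebenmann immersion/bundle theorem, and it rests on the equivariant Cerf--Morlet disjunction and isotopy extension results developed in \cite{LashofRothenberg}.

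For well-definedness, I would repeat the argument relative to $X\times I$: a $G$-homotopy of lifts gives, by the same handle induction applied to $X\times I$ with the smoothings on $X\times\{0,1\}$ already specified, a smoothing interpolating between the two, which is precisely the isotopy of \refdef{def: isotopy of smooth structures}. The injectivity of the map from isotopy classes to homotopy classes of lifts also drops out of the uniqueness part of this relative construction.

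The main obstacle is the local extension step across a single handle. This requires understanding smooth structures on $G\times_H(D(V)\times D(W))$ rel a given structure on $G\times_H(\partial D(V)\times D(W))$, and identifying them with equivariant vector bundle lifts of the tangent microbundle. In the non-equivariant case this is handled by immersion theory and Morlet's disjunction lemma; the equivariant version demands that these tools respect the stratification by orbit type. The dimension hypothesis $\dim X^H\neq 4$ enters here: it is exactly what allows the stratum-wise application of the non-equivariant theorem and what makes the equivariant isotopy extension argument go through in all dimensions simultaneously.
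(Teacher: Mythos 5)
The paper does not prove this theorem; it is stated as a summary of \cite{LashofRothenberg}'s smoothing theory and is cited without proof, so there is no in-paper argument to compare against. Your sketch is a reasonable high-level reconstruction of the Lashof--Rothenberg strategy (equivariant handle induction, local extension across a handle, relative version for well-definedness), and the dimension hypothesis does indeed serve to avoid dimension~$4$ on fixed strata.

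That said, one point in your sketch is misleading and worth flagging. You describe the local step as ``non-equivariant Kirby--Siebenmann smoothing theory can be invoked stratum by stratum,'' but equivariant smoothing is not simply a stratum-by-stratum application of the non-equivariant theorem. The interaction between strata is where the real difficulty lies: one must smooth the normal directions to the fixed sets compatibly with the group action, and topological $G$-manifolds do not come with equivariant tubular neighborhoods in the way smooth ones do. Lashof--Rothenberg's actual argument proceeds via an equivariant immersion theory and an equivariant Product Structure Theorem (concordance implies isotopy for $G$-smoothings), with the handle induction organized so that each new handle $G\times_H(D(V)\times D(W))$ is smoothed relative to its attaching region using the isovariant structure, not merely its fixed-point filtration. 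The hypothesis $\dim X^H\neq 4$ enters because the Product Structure Theorem and the concordance-implies-isotopy step fail non-equivariantly in dimension~$4$, and this failure would contaminate the equivariant argument on any stratum of that dimension. Your overall architecture is right, but if this were to be filled in you would need to replace the ``stratum-wise Kirby--Siebenmann'' heuristic with the genuinely equivariant microbundle and concordance machinery.
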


In the non-equivariant setting, homotopy classes of lifts can be studied using obstruction theory using cohomology valued in the homotopy of the fiber $ TOP(n)/ O(n)$.
However, the fixed sets of the map $ O_n(G)\rightarrow  TOP_n(G)$ are generally not connected so it does not make sense to map into a fiber.

\subsection{Stable $G$-Smoothings}
\begin{definition}\label{def: stable smoothing}
Let $X$ be a $G$-manifold.
A \emph{stable $G$-smoothing of $X$} consists of a representation $\rho$ and a smoothing $(Y,f)$ or $X\times\rho$.
Two stable $G$-smoothings $(Y_i,f_i,\rho_i)$, $i=0,1$, are \emph{stably isotopic} if there are representations $\sigma_i$ such that $\rho_0\oplus\sigma_0=\rho_1\oplus\sigma_1$ and $(Y_i\times\sigma_i,f_i\times\op{id}_{\sigma_i})$ are isotopic $G$-smoothings.
\end{definition}

Classification of stable $G$-smoothings is much nicer homotopically than the unstable case.
Define $BO(G):=\bigcup_n BO_n(G)$ and $B TOP(G):=\bigcup_n B TOP_n(G)$.
Lashof \cite{LashofStable} shows the following.

\begin{theorem}\label{thm: Lashof stable}
Let $X$ be a locally linear $G$-manifold.
Stable isotopy classes of stable $G$-smoothings of $X$ are in bijection with $G$-homotopy classes of lifts
\[
\begin{tikzpicture}[scale=2]
\node (A) at (2,1) {$BO(G)$};
\node (B) at (0,0) {$X$}; \node (C) at (2,0) {$B TOP(G)$};
\path[->] (A) edge (C) (B) edge node[above]{$\tau$} (C) (B) edge[dashed] (A);
\end{tikzpicture}.
\]
\end{theorem}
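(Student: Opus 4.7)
The plan is to reduce the statement to the unstable case \refthm{thm: Lashof--Rothenberg G-smoothing} by stabilizing $X$ with a sufficiently large $G$-representation $\rho$. Fix such a $\rho$ with $\dim \rho^H \geq 5$ for every subgroup $H \leq G$ (for instance, a large multiple of the regular representation); then $\dim(X \times \rho)^H \geq 5 \neq 4$, so Lashof--Rothenberg applies to $X \times \rho$ with $m := \dim X + \dim \rho$. The task is then to identify isotopy classes of $G$-smoothings of $X \times \rho$ (for $\rho$ large enough) with $G$-homotopy classes of lifts of $\tau_X$ to $BO(G)$, and to verify this identification is stable under enlarging $\rho$.

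For the forward map, given a stable $G$-smoothing $(Y, f, \rho)$, the smooth tangent bundle of $Y$ gives a $G$-classifying map $Y \to BO_m(G) \subset BO(G)$. Precomposing with the $G$-homeomorphism $f^{-1}:X\times\rho\to Y$ and with the zero-section $G$-homotopy equivalence $X \hookrightarrow X\times\rho$ yields a $G$-map $\ell:X\to BO(G)$. Since $f$ identifies the topological tangent microbundle of $Y$ with $\tau_{X\times\rho}=\tau_X\oplus\rho$ as $G$-microbundles, the composite $X\to BO(G)\to BTOP(G)$ is $G$-homotopic to the stabilization of $\tau_X$, so $\ell$ is a lift.

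For the backward map, given a lift $\ell:X\to BO(G)$ of $\tau_X$, enlarge $\rho$ further if needed so that $\ell$ refines to an unstable lift $\tilde\ell: X\to BO_m(G)$ of $\tau_{X\times\rho}$ through $BO_m(G)\to BTOP_m(G)$, pulled back along the projection $X\times\rho\to X$. Applying \refthm{thm: Lashof--Rothenberg G-smoothing} to $X\times\rho$ with this unstable lift then produces an isotopy class of $G$-smoothings of $X\times\rho$, that is, a stable $G$-smoothing of $X$. Verifying that both constructions descend to (stable) equivalence classes and are mutually inverse is then routine: enlarging $\rho$ to $\rho\oplus\sigma$ yields a stably isotopic smoothing by \refdef{def: stable smoothing}, a $G$-homotopy of lifts is converted to an isotopy of smoothings by applying Lashof--Rothenberg to $X\times\rho\times I$, and the two assignments are mutually inverse by the unstable correspondence.

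The main obstacle is the unstable representability step in the backward direction: one must show that, after adjoining a sufficiently large representation $\rho$, every stable lift $\ell:X\to BO(G)$ is realized by an unstable lift $\tilde\ell:X\to BO_m(G)$ that is genuinely, not merely stably, compatible with $\tau_{X\times\rho}$ in $BTOP_m(G)$. This requires careful analysis of the two colimit filtrations $BO(G)=\bigcup_n BO_n(G)$ and $BTOP(G)=\bigcup_n BTOP_n(G)$ and of the stabilization maps between them; in the non-compact case, one also needs an exhaustion over a $G$-CW filtration of $X$ to factor $\ell$ through a finite stage.
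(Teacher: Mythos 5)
The paper does not give a proof of this theorem at all: it is quoted from Lashof's paper on stable equivariant smoothing theory (the cited reference \cite{LashofStable}), and the author takes it as a black box. So there is no ``paper's own proof'' to compare against, and the correct check here is whether your reduction actually works.

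Your strategy of reducing to the unstable Theorem \ref{thm: Lashof--Rothenberg G-smoothing} by stabilizing with a large representation $\rho$ is the natural one, and your forward construction is sound. But there is a genuine gap in the backward direction, which you flag but do not close, and it is precisely where the technical content of Lashof's theorem lives. Given a stable lift $\ell:X\to BO(G)$ of $\tau$, you want to destabilize to a lift $\tilde\ell:X\to BO_m(G)$ of $\tau_{X\times\rho}:X\to BTOP_m(G)$ for some finite stage $m$. Factoring $\ell$ through a finite stage is not the issue (for $X$ a finite-dimensional $G$-CW complex this is routine); the issue is that the $G$-homotopy in $BTOP(G)$ witnessing that $\ell$ lifts $\tau$ must \emph{also} be compressed to a $G$-homotopy in $BTOP_m(G)$ witnessing that $\tilde\ell$ lifts $\tau_{X\times\rho}$. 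Because each $BTOP_n(G)^H$ is a disjoint union indexed by $\TOP$-equivalence classes of $H$-representations, the connectivity of the stabilization maps $BTOP_n(G)\to BTOP_{n+1}(G)$ is a real question, and the compression of the lifting homotopy requires a quantitative statement about how this connectivity grows with $n$ relative to $\dim X$. You write ``enlarge $\rho$ further if needed so that $\ell$ refines'' as though this is automatic, and ``Verifying that both constructions descend $\ldots$ is then routine,'' but these assertions are exactly what need a proof: without a concrete connectivity estimate for the stabilization maps on $BO_n(G)$, $BTOP_n(G)$, and the map between them, neither the existence of $\tilde\ell$ nor the independence of the resulting smoothing from the choice of $\rho$ and $\tilde\ell$ is established. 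As it stands your argument is an outline with the central lemma left as a placeholder.
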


Once we stabilize, the fixed sets $BO(G)^H$ and $BTOP(G)^H$ are connected so $G$-homotopy classes of lifts are classified by $G$-homotopy classes of maps to a fiber $ TOP(G)/ O(G)$.

\subsection{Conventions}
Note that the underlying topological space of $BO(G)$ is $BO(G)^e=BO$.
Similarly, the underlying space of $BTOP(G)$ is $BTOP$.
We simply write $BO$ for the $G$-space $BO(G)$ and similarly for $BTOP$.
We use $O_G$ and $TOP_G$ to denote the group of stable automorphisms commuting with the group action.
In particular, $BO_G$ and $BTOP_G$ are the fixed sets of $BO$ and $BTOP$.
We adopt similar conventions for the self-homotopy equivalences $F$ and $F_G$, introduced below.

We write $C_p$ for the cyclic group of order $p$.
Definitions and results stated for a group $G$ hold for any finite group.

\subsection{Main Results}
Before stating the main results of this paper, we mention some related results in the literature.
Madsen--Rothenberg \cite{MadsenRothenbergEquivAut} study the $G$-spaces $F/ TOP$ and $F/PL$, where $F=\varinjlim_VF(V)$ and $F(V)$ denotes the self-homotopy equivalences of $S^V$ with $G$ acting by conjugation.
They show that \cite[Theorem 1.1]{MadsenRothenbergEquivAut}, for an odd prime $p$, there are isomorphisms
\[
\pi_i F_{C_p}/ TOP_{C_p}\cong L_i^{\langle-\infty\rangle}(C_p)\oplus L_i(e).
\]
They also compute $[X,F/ TOP]^{C_p}$ for a $C_p$-CW-complex $X$ after inverting $2$.

For an odd prime $p$, let $t$ denote the order of $2$ in $\F_p^\times$.
Let $\mc{E}_p$ denote the $\Q$-vector space with the following dimension
\[
\dim\mc{E}_p=\begin{cases}
\frac{p-1}{2t}&t\text{ odd}\\
0&t\text{ even}
\end{cases}.
\]
The vector space $\mc{E}_p$ encodes $\Q$-linear relations between certain algebraic numbers appearing in the Atiyah--Singer $G$-signature formula.
Our main results are the following.

\begin{restatable}{theorem}{MainTheoremOdd}\label{thm: main odd}
Then there are isomorphisms
\[
[S^V, TOP/ O]^{C_p}\otimes\Q\cong\begin{cases}\mc{E}_p&\dim V^{C_p}=1,2\\ \Q&\dim V^{C_p}\equiv 3\text{ mod }4\\0&\text{ otherwise}\end{cases}.
\]
\end{restatable}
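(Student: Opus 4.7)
The plan is to compute $\pi_V^{C_p}(TOP/O)\otimes\Q$ by fitting $TOP/O$ into the fiber sequence of equivariant infinite loop spaces
\[
TOP/O \longrightarrow F/O \longrightarrow F/TOP,
\]
applying $[S^V,-]^{C_p}\otimes\Q$, and analyzing the resulting long exact sequence together with its connecting maps.

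For the $F/TOP$ factor I would invoke the Madsen--Rothenberg identification $\pi_iF_{C_p}/TOP_{C_p}\cong L_i^{\langle-\infty\rangle}(C_p)\oplus L_i(e)$ recalled above, combined with rational isotropy separation, to express $\pi_V^{C_p}(F/TOP)\otimes\Q$ in terms of rational Wall $L$-groups. After rationalization these two summands are detected respectively by the $C_p$-multisignature and the ordinary signature, both four-periodic, so the answer only depends on $\dim V$ and $\dim V^{C_p}$ modulo $4$. For the $F/O$ factor I would exploit that $F$ is rationally trivial non-equivariantly, so that $F/O\simeq_\Q BO$ non-equivariantly, combined with the fixed-set decomposition $BO^{C_p}=\coprod_UBO_{C_p}(U)$ indexed by $C_p$-representations $U$. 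This identifies $\pi_V^{C_p}(F/O)\otimes\Q$ with data detected by the Pontryagin character of a virtual equivariant bundle together with the underlying virtual $C_p$-representation.

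The central, and hardest, step is identifying the comparison map
\[
\pi_V^{C_p}(F/O)\otimes\Q\longrightarrow\pi_V^{C_p}(F/TOP)\otimes\Q
\]
with the map that sends a virtual $C_p$-bundle to its family of equivariant signatures. This is precisely the content of the Atiyah--Singer $G$-signature formula: the equivariant signature evaluated at a nontrivial $g\in C_p$ is a $\Q$-linear combination of values of the cotangent character $\cot(k\pi/p)$, weighted by characteristic numbers of the fixed bundle. Reading off the long exact sequence congruence class by congruence class in $\dim V^{C_p}\pmod 4$ then gives the three cases of the theorem: the uncancelled $L_{4k-1}$-type classical signature contributes a copy of $\Q$ when $\dim V^{C_p}\equiv 3\pmod 4$; the $\Q$-linear relations among the cotangent values $\cot(k\pi/p)$ contribute $\mc{E}_p$ in the low cases $\dim V^{C_p}=1,2$, since these are exactly the cases in which the fixed-set Pontryagin data is too sparse to pair nontrivially with the multisignature; and in all other cases the comparison is an isomorphism, giving $0$.

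The main obstacle will be the rigorous identification of the connecting homomorphism with the $G$-signature in the equivariant infinite-loop setting, and the linear-algebraic step of verifying that the resulting $\Q$-subspace of relations among cotangent values matches the definition of $\mc{E}_p$ as stated; in particular the parity-of-$t$ dichotomy in the dimension formula for $\mc{E}_p$ should emerge from the Galois action of $\Gal(\Q(\zeta_p)/\Q)$ on these cotangent values and whether complex conjugation lies in the subgroup generated by the Frobenius at $2$.
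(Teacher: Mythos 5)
Your plan is correct in broad outline and gives the same numerical answer, but it takes a genuinely different route from the paper's, and a few of the steps you flag as "the main obstacle" are precisely where the paper leans on specific technical results you should be aware of.

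The paper does not use the fibration $TOP/O\to F/O\to F/TOP$. Instead it first proves a reduction-to-fixed-set result (Proposition \ref{prop: reduction to fixed set}): $[X,TOP/O]^{C_p}\otimes\Q\cong[X^{C_p},TOP_{C_p}/O_{C_p}]\otimes\Q$, reducing the theorem to computing $\pi_m TOP_{C_p}/O_{C_p}\otimes\Q$. It then uses Madsen--Rothenberg's \cite[Corollary 1.2]{MadsenRothenbergEquivAut} to replace $TOP_{C_p}$ with $PL_{C_p}$, decomposes $PL_{C_p}(V)$ as $\widetilde{PL}_{C_p}(SW)\times PL(V^{C_p})$ in a range, and finally works with the fibration $\widetilde{SPL}_{C_p}(SW)/O_{C_p}(W)\to BO_{C_p}(W)\to B\widetilde{SPL}_{C_p}(SW)$, where Cappell--Weinberger provides the rational splitting $B\widetilde{SPL}_{C_p}(SW)\simeq_\Q B\widetilde{SPL}(SW)\times\tilde{L}(C_p)_{(0)}$. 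Your route through $F/O\to F/TOP$ avoids the $TOP$-to-$PL$ reduction and the explicit free-representation decomposition, but at the cost of an additional input you don't flag: to conclude $\pi_m F_{C_p}/O_{C_p}\otimes\Q\cong\pi_m BO_{C_p}\otimes\Q$ you need the $C_p$-equivariant stable stems to be finite in positive degrees (equivalently, $BF_{C_p}$ rationally trivial). This is true but is an extra theorem, whereas the paper's route never needs it. Either way, both routes converge on identifying a comparison map out of $\pi_*BO_{C_p}\otimes\Q$ with the Atiyah--Singer $G$-signature, and then invoking Ewing's theorem on the $\Q$-linear (in)dependence of the coefficients $\Phi_{m,k}$.

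Two more specific points. First, you describe the comparison map solely as "the family of equivariant signatures," but the target $\pi_mF_{C_p}/TOP_{C_p}\otimes\Q\cong L_m(C_p)\otimes\Q\oplus L_m(e)\otimes\Q$ rationally contains two copies of $L(e)$ (since $L^{\langle-\infty\rangle}(C_p)$ is unreduced), and the $BO$-factor of $BO_{C_p}\simeq BO\times\prod BU$ only cancels one of them; that is exactly where the $\Q$ in degrees $\equiv 3\pmod 4$ comes from, and your argument needs to track the forgetful map to the non-equivariant Pontryagin class, not just the multisignature. Second, you propose to re-derive the dimension formula for $\mc{E}_p$ via the Galois action of $\langle\mathrm{Frob}_2\rangle$ on cotangent values; this is exactly the content of Ewing's Proposition \ref{prop: dim of kernel}, which you can and should cite rather than reproving. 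The one further discrepancy to watch is that $\mc{E}_p$ appears in degrees $1$ \emph{and} $2$: degree $2$ is the kernel of the degree-$2$ comparison map and degree $1$ is its cokernel, which happen to have the same dimension by rank--nullity since source and target both have rank $(p-1)/2$; your phrasing ("fixed-set Pontryagin data too sparse") is suggestive but you should make this two-for-one explicit in the long exact sequence argument.
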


\begin{restatable}{theorem}{MainTheoremEven}\label{thm: main even}
There are isomorphisms
\[
[S^V, TOP/ O]^{C_2}\otimes\Q\cong 0.
\]
\end{restatable}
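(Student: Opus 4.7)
The plan is to follow the same strategy as used for \refthm{thm: main odd}. Taking the fiber sequence of $C_2$-infinite loop spaces
\[
TOP/O \longrightarrow F/O \longrightarrow F/TOP
\]
obtained from the inclusions $O\subseteq TOP\subseteq F$, and applying $\pi_V^{C_2}(-)\otimes\Q$, produces a long exact sequence. Since $TOP/O$ is rationally trivial non-equivariantly, all genuinely new input is at the equivariant level, and it suffices to show that the induced map
\[
\pi_V^{C_2}(F/O)\otimes\Q \longrightarrow \pi_V^{C_2}(F/TOP)\otimes\Q
\]
is an isomorphism for every $C_2$-representation $V$.

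First I would compute the left-hand side. Every real $C_2$-representation decomposes as a sum of copies of the trivial and sign representations, so any $C_2$-vector bundle splits into its trivial-isotypic and sign-isotypic summands. This induces a decomposition of $BO_{C_2}(V)$ as a product of ordinary classifying spaces $BO(\dim V^{C_2})\times BO(\dim V^{-})$, and $\pi_V^{C_2}(F/O)\otimes\Q$ can be expressed through the classical Pontryagin classes of the two summands. Next, I would compute the right-hand side using Madsen--Rothenberg's description \cite{MadsenRothenbergEquivAut} of $F_{C_2}/TOP_{C_2}$ in terms of the quadratic $L$-theory of $\Z[C_2]$. Rationally, the multisignature splits $L_n(\Z[C_2])\otimes\Q$ as a sum of copies of $\Q$ indexed by the irreducible rational characters of $C_2$, which are again the trivial and sign characters.

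The main obstacle is to identify the comparison map $F/O\to F/TOP$ on rational equivariant homotopy with the Atiyah--Singer $G$-signature pairing and verify that it is an isomorphism. In the odd-prime setting, the $G$-signature formula involves primitive $p$-th roots of unity, and the $\Q$-linear relations among the resulting algebraic numbers are exactly what produces the exotic classes recorded by $\mc{E}_p$ and the exceptional $\Q$ summand in degrees $\dim V^{C_p}\equiv 3\bmod 4$. For $C_2$ the only characters that occur are $\pm1$, which already lie in $\Q$; equivalently, the map from the real representation ring $RO(C_2)$ to the rationalized Witt group of $\Z[C_2]$ is surjective with no kernel coming from Galois-theoretic relations. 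Hence the two computations from the previous paragraph are matched index-by-index by the $G$-signature, the comparison map becomes a rational isomorphism, and the long exact sequence forces $\pi_V^{C_2}(TOP/O)\otimes\Q=0$ for every $V$.
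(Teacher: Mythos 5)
Your strategy imports the $L$-theory description of $F_{C_p}/TOP_{C_p}$ from \cite{MadsenRothenbergEquivAut} into the $p=2$ setting, but that theorem is stated only for odd primes, and the paper emphasizes precisely why it fails at $2$: Madsen--Rothenberg \cite{MadsenRothenberg2} show there is \emph{no} equivariant transversality for $C_2$, so the stabilization maps $\mc{S}_{k+n}(\RP^n)\to\mc{S}_{k+n+1}(\RP^{n+1})$ do not become isomorphisms and the fixed-point space $F_{C_2}/TOP_{C_2}$ does not split rationally as $L(C_2)\times L(e)$. Concretely, $\pi_k(F_{C_2}/O_{C_2})\otimes\Q\cong\pi_{k-1}O_{C_2}\otimes\Q\cong\Q^2$ in the nonzero degrees (since $O_{C_2}\simeq O\times O$), while $L_k(C_2)\oplus L_k(e)$ rationalizes to $\Q^3$ in degrees $\equiv 0\pmod 4$; so if the odd-prime description held, your comparison map could not be an isomorphism, and the claimed ``index-by-index'' matching by the $G$-signature does not go through. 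Your appeal to ``characters $\pm 1$ lying in $\Q$'' is also a bit off-key: for $C_2$ the eigenbundles of a free representation are real, not complex, so the Ewing--Schultz--Atiyah--Singer machinery involving first Chern classes of eigenbundles (which controls the exceptional $\mc{E}_p$ classes in the odd case) is not the relevant framework here.

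The paper's actual route for $p=2$ sidesteps the missing transversality entirely. It reduces (Proposition~\ref{prop: reduction to fixed set}) to showing $BO_{C_2}\to BTOP_{C_2}$ is a rational equivalence, splits $BO_{C_2}(V)\cong BO(W)\times BO(\R^n)$ with $W$ a sum of sign representations, identifies $\widetilde{TOP}_{C_2}(SW)$ with $\widetilde{TOP}(\RP^m)$, and then compares structure sets $\mc{S}(\RP^m)\to\mc{S}(S^m)$: by choosing a cofinal family with $m\equiv 1\pmod 4$ (resp.\ $m\equiv 3$), the reduced terms $\tilde{L}_{m+k+1}(C_2)$ and $\tilde{L}_{m+k}(C_2)$ that sit in the comparison sequence vanish away from $2$, so the map is a rational isomorphism without ever having to analyze a multisignature pairing. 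That is the idea you are missing: rather than matching an $L$-theory term against $BO_{C_2}$ via the $G$-signature, the paper arranges the degrees so that the $L$-theory contribution is rationally zero.
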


In the case where $p$ is odd, the rational homotopy in degrees $1$ and $2$ appear because there are $C_p$-vector bundles over $S^2$ that are trivial topologically.
These vector bundles can be distinguished by the first Chern classes of their eigenbundles.
The rational homotopy in degrees $3$ modulo $4$ appear because there are two copies of $L(e)$ in $F_G/ TOP_G$ (note that the $L^{\langle-\infty\rangle}(G)$ term of Madsen--Rothenberg's isomorphism is \emph{unreduced}) and $BO_G$ can only cancel out one of them.
In the case $p=2$, these groups are rationally trivial, essentially because $L(C_2)$ and $BO_{C_2}$ are rationally equivalent.

The proof of Theorems \ref{thm: main odd} and \ref{thm: main even} use results of Ewing \cite{EwingSpheresAsFPSets}, Schultz \cite{SchultzSpherelike} and Cappell--Weinberger \cite{CappellWeinbergerSimpleAS} to understand $BO_{C_p}(V)\to B\widetilde{SPL}_{C_p}(V)$ rationally when $V$ is a free representation.
Then, we apply results of Madsen--Rothenberg \cite{MadsenRothenberg2} to understand the map of $C_p$-spaces $BO\to BPL$ rationally.
By \cite{MadsenRothenbergEquivAut}, this is sufficient for understanding the map $BO\to BTOP$ rationally.

\subsection{Acknowledgments}
The author would like to thank Shmuel Weinberger for helpful conversations.
The author would also like to thank Alexander Kupers for asking a question that motivated Section \ref{sec: chern classes}.
The author was supported by NSF Grant DMS-1839968.

\section{Reduction to the Fixed Set}
In this section, we show that $[X,TOP/O]^{C_p}$ is rationally isomorphic to $[X^{C_p}, TOP_{C_p}/ O_{C_p}]$.
In order to make sense of this statement, we must show that $ TOP/ O$ is an equivariant infinite loop space.
It appears that this fact is known, or at least expected, but we have not been able to locate a proof in the literature so we sketch one below.

\begin{prop}\label{prop: TOP/O equivariant infinite loop space}
The $G$-space $ TOP/ O$ is an equivariant infinite loop space.
\end{prop}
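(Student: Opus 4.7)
The plan is to exhibit $TOP/O$ as the homotopy fiber of a map $BO\to BTOP$ of $G$-equivariant infinite loop spaces; since connective genuine $G$-spectra form a stable $\infty$-category, homotopy fibers of maps therein inherit the structure, which is what we need. Equivalently, one can argue that $O$ and $TOP$ are themselves group-like equivariant $E_\infty$-spaces and that $TOP/O$ is the homotopy fiber of $O\to TOP$, but delooping first is slightly cleaner because $BO$ and $BTOP$ are the classifying spaces already in sight from \refthm{thm: Lashof stable}.

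First I would recall the construction of $BO$ as a $G$-equivariant infinite loop space. The $G$-category whose objects are finite-dimensional $G$-representations and whose morphisms are $G$-linear isometries, together with the bifunctor $\oplus$, is a permutative $G$-category in the sense of Guillou--May (equivalently, a Shimakawa $\Gamma$-$G$-space). Feeding it into the equivariant Segal machine produces a connective $G$-spectrum whose underlying $G$-space is equivalent to $\Z\times BO$, with its standard fixed-point decomposition $BO^H=\coprod_V BO_H(V)$. In particular $BO$, restricted to the components indexed by trivial bundles, acquires the structure of a $G$-equivariant infinite loop space from Whitney sum.

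Next I would set up the analogous construction for $BTOP$. One replaces $G$-representations under $G$-linear isometry by $G$-representations under $G$-equivariant microbundle isomorphism, i.e.\ one works with the topological $G$-monoidal category whose morphism space from $V$ to $W$ is $TOP_G(V,W)$. Whitney sum of microbundles equips this with the structure of a permutative $G$-category, and the equivariant Segal machine produces a connective $G$-spectrum delooping the $G$-space $\Z\times BTOP$. The forgetful assignment from $G$-vector bundles to their underlying $G$-microbundles defines a lax symmetric monoidal $G$-functor, so the induced map $BO\to BTOP$ refines to a map of equivariant infinite loop spaces. Delooping the fiber sequence $O\to TOP\to TOP/O$ identifies $TOP/O$ with the homotopy fiber of $BO\to BTOP$ on underlying $G$-spaces, and this fiber inherits the equivariant infinite loop structure.

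The main obstacle will be the construction for $BTOP$: for $BO$ one can quote standard equivariant algebraic $K$-theory machinery, but $G$-microbundles do not assemble into a permutative $G$-category as transparently. One must either verify carefully that the evident bifunctor $\oplus$ is strictly (or at least coherently) associative and commutative on the nose after a suitable rectification, or appeal to Elmendorf--Mandell-style rectification results in the equivariant setting to replace the naturally occurring lax structure with a strict one. Once both sides are identified as equivariant infinite loop spaces and the map between them is upgraded to a map of such, concluding that $TOP/O$ is an equivariant infinite loop space is formal.
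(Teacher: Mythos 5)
Your proposal is correct and takes essentially the same route as the paper: both realize $TOP/O$ as the fiber of the map $BO\to BTOP$ of $G$-equivariant infinite loop spaces and conclude by closure of equivariant infinite loop spaces under homotopy fibers. The paper is even terser—it cites Costenoble--Waner for the analogous statement about $BF$ and asserts the $BO$, $BTOP$ cases follow "similarly"—whereas you spell out the permutative $G$-category input to an equivariant infinite loop space machine and honestly flag that the $BTOP$ case (rectifying Whitney sum of $G$-microbundles) is the nontrivial step; that candor is a strength, not a deficiency.
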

\begin{proof}
Costenoble--Waner \cite{CostenobleWaner} showed that $BF$ is an equivariant infinite loop space.
One can similarly show that $BO$ and $BTOP$ are also equivariant infinite loop spaces.
Since $TOP/O$ is the fiber of a map of equivariant infinite loop spaces, it is also an equivariant infinite loop space.
\end{proof}

As a consequence of Proposition \ref{prop: TOP/O equivariant infinite loop space}, $[X,TOP/O]^G$ is an abelian group for any $G$-CW-complex $X$.
Additionally, $ TOP_G/ O_G$ is an infinite loop space so $[X, TOP_G/ O_G]$ is an abelian group for any $G$-CW-complex $X$.

\begin{lemma}\label{lem: transfer}
Suppose $X$ is a $G$-CW-complex such that $G$ acts freely except at the basepoint.
Let $Y$ be a pointed $G$-space.
\end{lemma}

As an application, we have the following result.
\begin{prop}\label{prop: reduction to fixed set}
Suppose $X$ is a finite $C_p$-CW-complex.
Then the map $[X,TOP/O]^{C_p}\rightarrow [X^{C_p}, TOP_{C_p}/ O_{C_p}]$ given by restriction is rationally an isomorphism.
\end{prop}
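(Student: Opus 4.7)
The plan is to promote $TOP/O$ to a $C_p$-spectrum $\mathbf{E}$ via \refprop{prop: TOP/O equivariant infinite loop space} and then reinterpret both sides of the restriction map as values of the equivariant cohomology theory $\mathbf{E}^0_{C_p}(-)$. By construction $[X, TOP/O]^{C_p} \cong \mathbf{E}^0_{C_p}(X_+)$. For the target, $X^{C_p}$ carries trivial $C_p$-action, and the equivariant $\Omega$-spectrum property gives $(TOP/O)^{C_p} \simeq \Omega^\infty\mathbf{E}^{C_p}$, which under the paper's conventions matches $TOP_{C_p}/O_{C_p}$. This identifies $[X^{C_p}, TOP_{C_p}/O_{C_p}]$ with $\mathbf{E}^0_{C_p}(X^{C_p}_+)$, so the restriction map in the proposition becomes the pullback along $X^{C_p} \hookrightarrow X$.

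Next, I would apply the long exact sequence associated to the cofiber sequence of pointed $C_p$-CW complexes $X^{C_p}_+ \to X_+ \to X/X^{C_p}$. Showing the restriction map is a rational isomorphism then reduces to proving $\mathbf{E}^*_{C_p}(X/X^{C_p}) \otimes \Q = 0$ in degrees $0$ and $-1$. Since $C_p$ acts freely on $X/X^{C_p}$ away from the basepoint, one may filter by free $C_p$-cells of the form $C_{p+}\wedge D^n_+$ and chain the associated cofiber sequences to reduce to showing that $[C_{p+}\wedge S^n, \mathbf{E}]^{C_p} \otimes \Q = 0$. By the induction-restriction adjunction, this group equals $\pi_n$ of the underlying non-equivariant spectrum of $\mathbf{E}$, namely $\pi_n(TOP/O)$, which is finite as recalled in the introduction and therefore rationally trivial.

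The main obstacle I anticipate is the identification of the target with $\mathbf{E}^0_{C_p}(X^{C_p}_+)$, specifically verifying that $(TOP/O)^{C_p} \simeq \Omega^\infty\mathbf{E}^{C_p}$ as pointed spaces with the correct component of $TOP_{C_p}/O_{C_p}$ singled out. A secondary, more technical concern arises if $X$ is infinite-dimensional, where a $\lim^1$ argument is needed to pass from the cell-by-cell vanishing to the full statement; this is routine here because the rational vanishing is uniform across cells and $\pi_*(TOP/O)$ is finite in each degree.
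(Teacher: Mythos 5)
Your proof is correct and follows the same strategy as the paper: use the equivariant infinite loop structure from Proposition \ref{prop: TOP/O equivariant infinite loop space} to get a long exact sequence from the cofiber sequence $X^{C_p}_+\to X_+\to X/X^{C_p}$, and then observe that the $[X/X^{C_p},-]^{C_p}$ terms vanish rationally because $C_p$ acts freely away from the basepoint and $\pi_*(TOP/O)$ is finite. The paper compresses your cell-by-cell/induction-restriction step into the single remark that forgetting equivariance $[X/X^{C_p},TOP/O]^{C_p}\to[X/X^{C_p},TOP/O]$ is an isomorphism after inverting $p$ (a standard transfer argument for free actions), but the substance is identical.
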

\begin{proof}
Since $ TOP/ O$ is an equivariant infinite loop space, $[-, TOP/ O]^{C_p}$ is the $0$-th group of an $RO(C_p)$-graded cohomology theory.
In particular, there is an exact sequence
\[
[X/X^{C_p},TOP/O]^{C_p}\rightarrow[X,TOP/O]^{C_p}\rightarrow[X^{C_p}, TOP_{C_p}/ O_{C_p}]\rightarrow[X/X^{C_p}, \Omega TOP/ O]^{C_p}.
\]
If we note that all cells of $X/X^{C_p}$ are free except for the basepoint, we may apply equivariant obstruction theory and use that $TOP/O$ has finite homotopy groups to conclude that $[X/X^{C_p},TOP/O]^{C_p}$ is finite.
This finishes the proof.
\end{proof}
\begin{remark}
Proposition \ref{prop: reduction to fixed set} holds for a general finite group $G$ if the only isotropy subgroups of $X$ are $G$ and the trivial group.
\end{remark}

\section{The Case $p$ is Odd}
In this section, we consider the case $G=C_p$ where $p$ is an odd prime.
The homotopy groups of $F_{C_p}/TOP_{C_p}$ were studied in \cite{MadsenRothenbergEquivAut}.
They showed that, rationally, the homotopy groups are isomorphic to those of $BO\times BO_{C_p}$.
So rationally, there is the following commuting diagram.
\begin{equation}\label{diag: Psi}
\begin{tikzpicture}[scale=2]
\node (A) at (0,1) {$\pi_mF_{C_p}/O_{C_p}$};\node (B) at (2,1) {$\pi_mF_{C_p}/TOP_{C_p}$};
\node (C) at (0,0) {$\pi_mBO_{C_p}$};\node (D) at (2,0) {$\pi_mBO\times BO_{C_p}$};
\path[->] (A) edge (B) (A) edge (C) (B) edge (D) (C) edge node[above]{$\Psi$} (D);
\end{tikzpicture}
\end{equation}
The vertical arrows are rationally isomorphisms so it suffices to understand the bottom arrow.
One may hope that this can be identified with the standard inclusion but this is not the case.
The isomorphism of Madsen--Rothenberg is obtain through surgery theory and the bottom map does admit an explicit description in terms of characteristic classes.
These characteristic classes originate from the Atiyah--Singer $G$-signature theorem so, a priori, they are very complicated.
Fortunately, Ewing's work \cite{EwingSpheresAsFPSets, EwingSemifree} essentially computes the map in question.

\subsection{Representations and Normal $G$-Vector Bundles}
We say that a $G$-representation is free if $G$ acts freely in the complement of the origin and we say that a $G$-vector bundle is free if its fibers are free.
When we work with a fixed $G$-smoothing $(Y,f)$, we let $E$ denote the normal bundle of $f^{-1}(M)$.

If $V$ is a $G$-representation and $M$ is a space, we let $\ep_V$ denote the $G$-vector bundle $M\times V\rightarrow M$.

For smooth $G$-manifolds $X$, Atiyah--Singer define the $G$-signature which is valued in $\widetilde{RO}(G)$.
They give a formula for the $G$-signature in terms of characteristic classes of the tangent bundle and the normal $G$-vector bundle of the fixed set.
Hence, given a free $G$-vector bundle $E$ over a smooth manifold $M$, we may use their formula to define the $G$-signature of $E$.
We omit an explicit description of the formula as we only need the results stated below.

Let $\zeta$ denote a fixed primitive $p$-th root of unity.
Given a generator $g_0$ of $C_p$, a finite dimensional real $C_p$-representation $V$ decomposes into a sum of eigenspaces
\[
V\cong\R^i\oplus\bigoplus_{k=1}^{\frac{p-1}{2}}V_k
\]
where $g_0$ acts trivially on $\R^i$ and each $V_k$ is a complex vector space with $g_0$ acting via multiplication by $\zeta^k$.
In particular, the reduced representation ring $\widetilde{RO}(C_p)$ is isomorphic, as an abelian group, to $\Z^{\frac{p-1}{2}}$ and $BO_{C_p}\simeq BO\times\prod_{k=1}^{\frac{p-1}{2}}BU$.
If $(Y,f)$ is a $C_p$-smoothing of a locally linear $C_p$-manifold $X$ and if $M$ is a component of the fixed set of $X$, then $M$ has a normal $C_p$-vector bundle $\nu$.
A generator $g_0$ of $C_p$ determines an eigenbundle decomposition $\nu=\bigoplus_{k=1}^{\frac{p-1}{2}}\nu_k$.
If two $C_p$-smoothings are isotopic, the normal bundles of the preimages of $M$ must be isomorphic.

\subsubsection{Ewing's Relations}
In \cite{EwingSpheresAsFPSets}, Ewing studies the Chern classes of normal $C_p$-vector bundles of fixed sets of smooth $C_p$-actions on even dimensional spheres.
In this situation, the fixed set is a $2m$-dimensional rational homology spheres and the Atiyah--Singer $G$-signature theorem implies
\[
\sum_{k=1}^{\frac{p-1}{2}}\Phi_{m,k}c_m(\nu_k)=0
\]
where the $\Phi_{m,k}\in\Q(\zeta)$ are either totally real or purely imaginary, depending on the parity of $m$.

Let $\C(\zeta^k)$ denote the irreducible $C_p$-representation where $g_0$ acts via multiplication by $\zeta^k$.
Define a $\Q$-linear transformation $\Phi_m:\widetilde{RO}(C_p)_{(0)}\rightarrow \Q(\zeta+(-1)^m\zeta^{-1})$ by $\Phi\left(\sum_{k=1}^{\frac{p-1}{2}}a_k\C(\zeta^k)\right)=\sum_{k=1}^{\frac{p-1}{2}}\Phi_{m,k}a_k$.
Since $\Q(\zeta+(-1)^m\zeta^{-1})$ is always a $\frac{p-1}{2}$-dimensional vector space, we regard $\Phi_m$ as a linear transformation $\widetilde{RO}(C_p)_{(0)}\to\Q^{\frac{p-1}{2}}$.
The following is \cite[Theorem 1.1]{EwingSpheresAsFPSets}.

\begin{theorem}[Ewing]\label{thm: Ewing}
The linear transformation $\Phi_m$ is an isomorphism when $m\neq1$.
If $2$ has even order in $\F_p^\times$, then $\Phi_1$ is an isomorphism.
If $2$ has odd order $t$ in $\F_p^\times$, then $\dim_{\Q}\ker\Phi_1=\frac{p-1}{2t}$.
\end{theorem}

We henceforth make the abbreviation $\mc{E}_p:=\ker\Phi_1$.





\subsection{Computation of $\pi_* TOP_{C_p}/ O_{C_p}$}

In order to compute $\pi_mTOP_{C_p}/O_{C_p}$ rationally, work of Madsen--Rothenberg allows us to restrict our attention to $\pi_mTOP_{C_p}(V)/O_{C_p}(V)$ for a finite dimensional representation $V$ sufficiently large with respect to $m$.
Writing $V=W\oplus\R^i$ where $W$ is a representation with no trivial summands, Madsen--Rothenberg's work reduces the study of these homotopy groups to the study of lens space block bundles over $S^m$.
Surgery theory reduces the study of lens space block bundles to the study of $G$-signatures, allowing us to describe the map $\Psi$ in Diagram (\ref{diag: Psi}) using $\Phi$.
We now spell out the details of this procedure.
To simplify notation in what follows, we make the abbreviation
\[
X:=(\pi_{2m}BO\times\pi_{2m}BO\times\pi_{2m}BU^{\frac{p-1}{2}})_{(0)}.
\]

\begin{prop}\label{prop: LES for p odd}
Let $E'\oplus E$ represent a stable $C_p$-vector bundle over $S^{2m}$ such that $C_p$ acts trivially on $E'$ and freely away from the zero section on $E$.
Let $E_k$ denote the eigenbundle of $E$ corresponding to the representation $\C(\zeta^k)$.
By abuse of notation, let $\Phi_m(E)$ denote $\Phi_m\left(\sum_{k=1}^{\frac{p-1}{2}}c_m(E_k)\C(\zeta^k)\right)$.
There is an isomorphism $(\pi_{2m} BTOP_{C_p})_{(0)}\cong X$ such that the composition
\[
(\pi_{2m}BO_{C_p})_{(0)}\to(\pi_{2m}BTOP_{C_p})_{(0)}\xrightarrow{\cong} X
\]
may be identified with
\[
E'\oplus E\mapsto(a(E),b(E'),\Phi_m(E))
\]
where $a:(\pi_{2m}BO_{C_p})_{(0)}\to(\pi_{2m}BO)_{(0)}$ is a surjection and $b:\pi_{2m}BO_{(0)}\to\pi_{2m}BO_{(0)}$ is an isomorphism.
\end{prop}

\begin{proof}
We begin by reducing the problem from a stable topological setting to an unstable $PL$-setting.
Specifically, for fixed $m$ and $V$ a sufficiently large representation, there is the following diagram.
\[
\begin{tikzpicture}[scale=2]
\node (A) at (0,2) {$\pi_{2m}BO_{C_p}(V)$};\node (B) at (2,2) {$\pi_{2m}BPL_{C_p}(V)$};\node (C) at (4,2) {$X$};
\node (D) at (2,1) {$\pi_{2m}BPL_{C_p}$};
\node (E) at (0,0) {$\pi_{2m}BO_{C_p}$};\node (F) at (2,0) {$\pi_{2m}BTOP_{C_p}$};
\path[->] (A) edge node[left]{$\cong$} (E) (A) edge (B) (B) edge node[above]{$(A)$} (C) (B) edge node[left]{$(B)$} (D) (D) edge node[left]{$(C)$} (F) (E) edge (F);
\end{tikzpicture}
\]
The left vertical map is an isomorphism integrally.
We first show that the other vertical maps in the rectangle are rational isomorphisms and postpone discussion of the map $(A)$ until afterwards.

By \cite[Corollary 1.2]{MadsenRothenbergEquivAut}, $TOP_{C_p}/PL_{C_p}$ is rationally trivial so the map $(C)$ is rationally a isomorphism.
Write $V=W\oplus\R^i$ where $W$ is a free representation.
If $V\to V'$ is an inclusion of $V$ into a larger representation, then $PL_{C_p}V\to PL_{C_p}V'$ is $(i+1)$-connected by \cite[Theorem 4.2]{MadsenRothenberg1}.
Since $PL_{C_p}=\varinjlim_V PL_{C_p}(V)$ is a filtered colimit, this shows that, if we take $V$ sufficiently large, the map $(B)$ is an isomorphism.

We now describe the map $(A)$.
Let $V$ be as above.
By the discussion before \cite[Lemma 2.1]{MadsenRothenberg2} and by \cite[Proposition 2.7]{MadsenRothenberg2}, there is an $i$-connected map
\[
PL_{C_p}(V)\simeq\widetilde{PL}_{C_p}(SW)\times PL(V^{C_p}).
\]
By $PL$-smoothing theory, the homotopy groups of $PL(V^{C_p})$ are rationally isomorphic to those of $O(V^{C_p})$ in degrees below $i$.
By \cite[Theorem 1]{CappellWeinbergerSimpleAS}, there are isomorphisms $\pi_{2m}(B\widetilde{PL}_{C_p}(SW))_{(0)}\cong \pi_{2m}B\widetilde{SPL}(SW)_{(0)}\times L_{2m}(C_p)_{(0)}$ when $W$ is sufficiently large.
The $L$-group is rationally isomorphic to $\pi_{2m}BU^{\times\frac{p-1}{2}}$ and $\pi_{2m}B\widetilde{SPL}(SW)$ is rationally isomorphic to $\pi_{2m}BO$ (this follows from the surgery exact sequence for the sphere).
It follows that there is a linear isomorphism $(A)$ claimed above.

To prove the proposition, we study the top row of the diagram above and carefully unravel the definition of $(A)$.
The composite
\[
 O_{C_p}(W)\times O(V^{C_p})\cong O_{C_p}(V)\rightarrow PL_{C_p}(V)\rightarrow \widetilde{PL}_{C_p}(SW)\times PL(V^{C_p})
\]
sends a pair $(\phii,\psi)$ to $(\phii|_{SW},\psi)$ where the first coordinate is the restriction to the unit sphere and the second coordinate is obtained by regarding $\psi$ as a $PL$-homeomorphism.\footnote{Technically, we should factor this through the space of piecewise smooth homeomorphisms.}
Rationally, the map $ O(V^{C_p})\rightarrow PL(V^{C_p})$ is an isomorphism on homotopy in degrees below $i$.
This isomorphism yields the desired isomorphism $b$.

We now examine the map $O_{C_p}(W)\rightarrow\widetilde{PL}_{C_p}(SW)$.
This map is studied in \cite{CappellWeinbergerSimpleAS} and \cite{WangChern}.
Details of the following arguments may be found in these references.
Cappell--Weinberger show that there is a rational equivalence
\[
B\widetilde{PL}_{C_p}(SW)\rightarrow B\widetilde{SPL}(SW)\times\tilde{L}(C_p)_{(0)}.
\]
We analyze the maps into each component independently.

The composition
\[
\pi_{2m}BO_{C_p}(W)\rightarrow\pi_{2m}B\widetilde{SPL}_{C_p}(SW)\rightarrow\pi_{2m}\tilde{L}(C_p)_{(0)}\cong\Q^{\frac{p-1}{2}}
\]
sends a $C_p$-vector bundle $E$ over $S^{2m}$ to the $G$-signature of $E$.
By the Atiyah--Singer $G$-signature theorem, this map may be identified with the linear transformation $\Phi_m$.

The composition
\[
\pi_{2m}BO_{C_p}(W)\rightarrow \pi_{2m}B\widetilde{SPL}_{C_p}(SW)\rightarrow \pi_{2m} B\widetilde{SPL}(SW)
\]
sends a $C_p$-vector bundle $E$ to its (non-equivariant) sphere bundle considered as a $PL$-block bundle and is rationally an isomorphism for $W$ sufficiently large (again by $PL$-smoothing theory).
This describes the map $a$ in the proposition.
The fact that $a$ is a surjection follows from considering the relation between Pontryagin classes and Chern classes.
\end{proof}

\begin{proof}[Proof of Theorem \ref{thm: main odd}]
By \ref{prop: reduction to fixed set}, it suffices to consider the long exact sequence of the fibration
\[
TOP_{C_p}/O_{C_p}\to BO_{C_p}\to BTOP_{C_p}.
\]
Since the homotopy groups of $BO_{C_p}$ and $BTOP_{C_p}$ are rationally trivial in odd degrees, we obtain the following exact sequence (where we have suppressed rationalization in the notation)
\[
0\to\pi_{2m}TOP_{C_p}/O_{C_p}\to\pi_{2m}BO_{C_p}\to\pi_{2m}BTOP_{C_p}\to\pi_{2m-1}TOP_{C_p}/O_{C_p}\to0.
\]
By applying Proposition \ref{prop: LES for p odd}, we obtain the following computation
\[
(\pi_m TOP_{C_p}/ O_{C_p})_{(0)}\cong\begin{cases}\mc{E}_p&m=1,2\\ \Q&m\equiv 3\text{ mod }4\\0&\text{ otherwise}\end{cases}
\]
which proves the theorem.
\end{proof}

We record a corollary of Proposition \ref{prop: LES for p odd} which will be useful in future work.
Here, we do not consider the rational homotopy groups.

\begin{cor}\label{cor: ker BO->BTOP trivial}
Suppose that either $m\neq 2$ or that $2$ has even order in $\F_p^{\times}$.
Then the composition
\[
\pi_m BU^{\times\frac{p-1}{2}}\to \pi_m BO_{C_p}\to \pi_m BTOP_{C_p}
\]
is injective.
\end{cor}
\begin{proof}
It suffices to consider the case where $m$ is even.
Under the hypotheses of the corollary, $\Phi_m$ is an injective $\Q$-linear transformation.
The result follows from Proposition \ref{prop: LES for p odd} and the fact that $\pi_m BU$ is torsion-free.
\end{proof}

\section{The Case $p=2$}
In the case $G=C_2$, Madsen--Rothenberg \cite{MadsenRothenberg2} show that there is no equivariant transversality.
The technical result behind this statement is that the maps
\[
\mc{S}_{k+n}(\RP^n)\rightarrow \mc{S}_{k+n+1}(\RP^{n+1})
\] 
do not eventually become isomorphisms.
Here, we use $\mc{S}$ to denote the surgery theoretic structure set.
The map can be written explicitly as follows.
If $f:Y\rightarrow\RP^n\times D^k$ represents an element of $\mc{S}_{k+n}(\RP^n)$ let $\tilde{f}:\tilde{Y}\rightarrow S^n\times D^k$ be the $C_2$-equivariant map on the universal cover.
Then the class of $f$ is sent to $(\tilde{f}*\op{Id}_{S^0})_{C_2}:(\tilde{Y}*S^0)_{C_2}\rightarrow (S^n * S^0\times D^k)_{C_2}$ where $C_2$ acts nontrivially on $S^0$.

Although these maps are not isomorphisms, the colimit can be computed following \cite[Section 4]{MadsenRothenberg2}.
\begin{prop}\label{prop: stable structure set of RP^n}
There is an isomorphism
\[
\varinjlim_{n}\mc{S}_{n+k}^{PL}(\RP^n)\cong\begin{cases}\Z\oplus\bigoplus\Z/2&k\equiv 0\text{ mod }4\\\bigoplus\Z/2&\text{otherwise}\end{cases}
\]
where the torsion part is a countable direct sum.
\end{prop}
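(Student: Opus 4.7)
The plan is to apply the Sullivan--Wall surgery exact sequence
\[
\cdots \to L_{n+k+1}(\Z[C_2]^{w_n}) \xrightarrow{\sigma_n} \mc{S}^{PL}_{n+k}(\RP^n) \xrightarrow{\eta_n} [\RP^n\times D^k,\partial; G/PL,*] \xrightarrow{\theta_n} L_{n+k}(\Z[C_2]^{w_n}) \to \cdots,
\]
where $w_n$ denotes the orientation character of $\RP^n$, and then pass to the direct limit in $n$. First, using the explicit join description of the stabilization map given immediately before the proposition, I would verify that these sequences assemble into a ladder of exact sequences compatible with the inclusions $\RP^n \hookrightarrow \RP^{n+1}$. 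Since filtered colimits of abelian groups preserve exactness, the colimit sequence is again exact.

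Next, I would identify $[\RP^n\times D^k,\partial; G/PL,*] \cong [\RP^n, \Omega^k G/PL]$ and observe that under this identification the stabilization map is just restriction along $\RP^n \hookrightarrow \RP^{n+1}$; the colimit is therefore $[\RP^\infty, \Omega^k G/PL]$. Using Sullivan's splitting $G/PL_{[1/2]} \simeq \prod_j K(\Z[\tfrac12], 4j)$ together with the $2$-local Postnikov description of $G/PL$ (built from $K(\Z_{(2)}, 4j)$'s and $K(\Z/2, 4j-2)$'s with known $k$-invariants) and the fact that $H^*(\RP^\infty)$ is $2$-primary in positive degrees, a direct computation yields a countable direct sum of $\Z/2$'s, plus a free $\Z$ summand precisely when $k\geq 4$ and $k\equiv 0\pmod 4$, arising from $H^0(\RP^\infty;\Z)$ hitting $\pi_k(G/PL)\cong\Z$.

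Then I would compute the colimits of the $L$-group terms and the induced maps $\sigma^\infty$ and $\theta^\infty$. Wall's calculation of $L_*(\Z[C_2]^\pm)$, combined with the geometric effect of the stabilization on $L$-theory (which trades the orientation character $w_n \leftrightarrow w_{n+1}$ and shifts the degree by one), produces a tractable description of the colimit maps. Substituting into the exact sequence of colimits and splitting off the torsion recovers the claimed abelian group; in particular, for $k=0$ the $\Z$ summand must arise from the image of $\sigma^\infty$ rather than from normal invariants.

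The main obstacle is the $L$-theory analysis: one has to keep careful track of the alternation of $w_n$ with the parity of $n$, the unit dimension shift per stabilization, and the interaction of these with the $4$-periodicity of $L$-theory. This bookkeeping is precisely the computation carried out in \cite[Section 4]{MadsenRothenberg2}, which we invoke to finish the identification.
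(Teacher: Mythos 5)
There is a genuine gap, and it lies at the very first step: the stabilization maps
$\mc{S}_{n+k}^{PL}(\RP^n)\to\mc{S}_{n+k+1}^{PL}(\RP^{n+1})$ do \emph{not} fit into a commutative ladder with the surgery exact sequences in the direction a direct limit needs. The natural map on normal invariants is the restriction $[\RP^{n+1},\Omega^k G/PL]\to[\RP^n,\Omega^k G/PL]$, which points the wrong way relative to the structure-set stabilization; there is no natural map $[\RP^n,\Omega^kG/PL]\to[\RP^{n+1},\Omega^kG/PL]$ completing a ladder. This failure of equivariant transversality is precisely the observation of Madsen--Rothenberg that the paper quotes immediately before the proposition, and it is the entire reason this colimit requires a nonstandard argument rather than a routine ``colimit preserves exactness'' step. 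Your proposal recognizes that bookkeeping is subtle, but it presupposes the existence of the ladder, which is exactly what is not available.

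The paper's actual argument sidesteps this. Restricting to odd-dimensional $\RP^{2m+1}$ and even normal degree, Madsen--Rothenberg show that the normal-invariant map $\mc{S}_{2\ell+2m+1}^{PL}(\RP^{2m+1})\to[S^{2\ell}\wedge\RP^{2m+1}_+,F/PL]$ is an isomorphism, and then exhibit a commuting \emph{square} in which the left vertical arrow is stabilization and the right vertical arrow is restriction --- that is, the two verticals point in \emph{opposite} directions. Commutativity forces the stabilization maps to be split injections and the restriction maps to be surjections, and the colimit is computed as an increasing union of the finite groups $[S^{2\ell}\wedge\RP^{2m+1}_+,F/PL]$. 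This also explains why the answer is a countable direct sum of $\Z/2$'s rather than something like $[\RP^\infty,\Omega^kG/PL]$: that mapping group is an inverse limit over restrictions (a product-type answer), whereas the stabilized structure set is the union over the split inclusions (a direct-sum-type answer). Your identification of the colimit with $[\RP^\infty,\Omega^kG/PL]$ therefore also cannot be right, independently of the ladder issue. The $L$-theory bookkeeping you mention at the end is genuinely needed, but only after the structure is set up correctly via the opposite-direction diagram, and it is done by Madsen--Rothenberg rather than by a colimit of surgery sequences.
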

\begin{proof}
In the proof of \cite[Proposition 4.3]{MadsenRothenberg2}, Madsen--Rothenberg show that there are the following commuting diagrams.
\[
\begin{tikzpicture}[scale=2]
\node (A) at (0,1) {$\mc{S}_{2\ell+2m+1}^{PL}(\RP^{2m+1})$};\node (B) at (3,1) {$[S^{2\ell}\wedge\RP^{2m+1}_+,F/PL]$};
\node (C) at (0,0) {$\mc{S}_{2\ell+2m+5}^{PL}(\RP^{2m+5})$};\node (D) at (3,0) {$[S^{2\ell}\wedge\RP^{2m+5}_+,F/PL]$};
\path[->] (A) edge node[above]{$\cong$} (B) (A) edge (C) (C) edge node[above]{$\cong$} (D) (D) edge (B);
\end{tikzpicture}
\]
The left vertical map is the composite of suspensions and the right vertical map is surjective.
The horizontal maps are obtained from the $PL$-surgery exact sequence.
Computing the cohomology group and using that the maps on the left form a cofinal system proves the proposition in the case $k$ is even.
The case where $k$ is odd follows from \cite[Proposition 4.3]{MadsenRothenberg2}.
\end{proof}

\begin{prop}\label{prop: rational homotopy of PL_C_2}
For $k>0$, there are isomorphisms
\[
\pi_k PL_{C_2}\otimes \Q\cong\pi_k TOP_{C_2}\otimes\Q\cong\begin{cases}\Q^2&k\equiv 0\text{ mod }4\\0&\text{ otherwise}\end{cases}.
\]
\end{prop}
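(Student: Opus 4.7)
The plan is to follow the template used for the odd prime case in the proof of Theorem \ref{thm: TOP_G/O_G}, but with the Cappell--Weinberger input replaced by the stable structure set computation of Proposition \ref{prop: stable structure set of RP^n}. Writing a $C_2$-representation as $V = W \oplus V^{C_2}$ with $W$ a sum of sign representations, I would first invoke the Madsen--Rothenberg splitting \cite[Lemma 2.1, Proposition 2.7]{MadsenRothenberg2} to obtain a $\dim V^{C_2}$-connected equivalence
\[
PL_{C_2}(V) \simeq \widetilde{PL}_{C_2}(SW) \times PL(V^{C_2}).
\]
Stabilizing in both $W$ and $V^{C_2}$ decomposes $PL_{C_2}$, in the relevant range, into an equivariant block factor $\widetilde{PL}_{C_2}(SW_\infty)$ together with the non-equivariant stable $PL$ group, and each of the two factors should contribute exactly one copy of $\Q$ in the stated degrees.

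The non-equivariant $PL$ factor is handled by Bott periodicity together with the rational triviality of $PL/O$, producing the first copy of $\Q$. For the equivariant block factor, since $C_2$ acts freely on $SW$, passing to the quotient identifies $\widetilde{PL}_{C_2}(SW) \simeq \widetilde{PL}(\RP^{|W|-1})$ on positive-degree homotopy (the kernel of the covering map contributes only to $\pi_0$). The rational homotopy of $\widetilde{PL}(\RP^n)$ in the stable range can then be read off the $PL$-surgery exact sequence for $\RP^n$: rationally, only the free summand of the stable structure set $\varinjlim_n \mc{S}^{PL}_{n+k}(\RP^n)$ survives, and by Proposition \ref{prop: stable structure set of RP^n} this contributes the second copy of $\Q$ in the stated degrees.

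The $TOP$ version follows because the non-equivariant homotopy groups of $TOP/PL$ are $2$-primary torsion, and a similar statement holds for the equivariant fixed set after rationalizing; hence the natural map $PL_{C_2} \to TOP_{C_2}$ is a rational equivalence.

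The most delicate step I expect is the rational identification of $\pi_k \widetilde{PL}(\RP^n)$ in the stable range with the free rank-one summand of the structure set from Proposition \ref{prop: stable structure set of RP^n}. This requires carefully tracking degree shifts between block automorphism groups, their classifying spaces, and the surgery structure set, and verifying that the $\bigoplus \Z/2$-torsion in the structure set indeed drops out cleanly after rationalizing so that only the $\Z$-summand contributes.
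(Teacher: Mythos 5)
Your proposal follows the paper's proof essentially line for line: the same Madsen--Rothenberg splitting $PL_{C_2}(V)\simeq\widetilde{PL}_{C_2}(SW)\times PL(V^{C_2})$, the same reduction of the equivariant block factor to the stable structure set $\varinjlim_n\mc{S}^{PL}_{n+k}(\RP^n)$ (via the rational triviality of $F_{C_2}(SW)$ and the identification of $F_{C_2}(SW)/\widetilde{PL}_{C_2}(SW)$ with $\mc{S}^{PL}(\RP^n)$), and the same invocation of Proposition~\ref{prop: stable structure set of RP^n} to extract the single $\Q$-summand. The only place you are a bit glib is the passage from $PL_{C_2}$ to $TOP_{C_2}$: you assert that the rational triviality of the non-equivariant $TOP/PL$ extends to ``the equivariant fixed set,'' which is not automatic for even-order groups, whereas the paper avoids this by simply re-running the computation of Proposition~\ref{prop: stable structure set of RP^n} in the $TOP$ category (mapping the $PL$-surgery sequence to the $TOP$-surgery sequence and noting the rational answer is unchanged). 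Both routes are fine; the paper's is slightly more defensible as written. One thing neither you nor the paper's stated proposition handles cleanly is the degree bookkeeping: since $\pi_k PL(V^{C_2})\otimes\Q\cong\Q$ for $k\equiv3\bmod4$ (not $k\equiv0\bmod 4$) and $\pi_k\widetilde{PL}_{C_2}(SW)\otimes\Q\cong\pi_{k+1}B\widetilde{PL}_{C_2}(SW)\otimes\Q\cong\mc{S}^{PL}_{k+1}(\RP^n)\otimes\Q$, the $\Q^2$ actually lands in degrees $k\equiv3\bmod4$; this is consistent with the later use in Theorem~\ref{thm: main thm C_2}, where $\pi_kBTOP_{C_2}\otimes\Q\cong\Q^2$ for $k\equiv0\bmod4$, and indicates a sign/degree typo in the stated proposition rather than a gap in the argument.
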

\begin{proof}
Let $W$ be a free $C_2$-representation of dimension $n+1$ with unit sphere $SW$.
Then,
\[
\pi_k F_{C_2}(SW)/\widetilde{PL}_{C_2}(SW)\cong\mc{S}^{PL}_k(\RP^n)
\]
for $k>0$.
Since $F_{C_2}(SW)$ has finite homotopy groups, there is an isomorphism
\[
\pi_k B\widetilde{PL}_{C_2}(SW)\otimes\Q\cong\mc{S}^{PL}_k(\RP^n)\otimes\Q.
\]

If $V\cong W\oplus\R^m$, then $\pi_k PL_{C_2}(V)\cong\pi_k\widetilde{PL}_{C_2}(SW)\times \pi_kPL(\R^m)$ for $k\le m$ (see \cite[Section 2]{MadsenRothenberg2}).
Taking a colimit over representations $V$ and using the identification above proves the result for $PL_{C_2}$.

The case of $TOP_{C_2}$ follows similarly; by mapping the $PL$-surgery sequence to the $TOP$-surgery sequence, one sees that the $TOP$ version of the diagram in proof of Proposition \ref{prop: stable structure set of RP^n} is the same rationally.
\end{proof}

Recall the surgery groups for $C_2$ with the trivial orientation are as follows.
\[
L_k(C_2)\cong\begin{cases}\Z\oplus\Z&k\equiv0\text{ mod }4\\0&k\equiv1\text{ mod }4\\\Z/2&k\equiv 2,3\text{ mod 4}\end{cases}
\]
As we will be concerned with the colimit $\varinjlim_m\mc{S}_{k+m+1}(\RP^m)$, we will assume $m$ is odd so only the trivial orientation will be relevant.
A structure on $\RP^m\times D^k$ pulls back to a structure on $S^m\times D^k$.
Following \cite{CappellWeinbergerSimpleAS}, this yields a map of structure spaces $\mc{S}(\RP^m)\rightarrow\mc{S}(S^m)$ whose fiber is the reduced $L$-space $\tilde{L}_{m+1}(C_2)$ away from $2$.

Consider the following segment in the long exact sequence of homotopy groups.
\[
\tilde{L}_{m+k+1}(C_2)\rightarrow\mc{S}_{m+k+1}(\RP^m)\rightarrow\mc{S}_{m+k+1}(S^m)\rightarrow\tilde{L}_{m+k}(C_2)
\]
When $k\equiv0\text{ mod }4$ and $m\equiv1\text{ mod }4$, the surgery groups vanish away from $2$.
This is also true when $k\equiv2\text{ mod }4$ and $m\equiv 3\text{ mod }4$.
By fixing $k$ and choosing the appropriate cofinal system of $\RP^m$, we see that
\[
\varinjlim_m\mc{S}_{k+m+1}(\RP^m)\cong\varinjlim_m\mc{S}_{k+m+1}(S^m)
\]
is rationally an isomorphism.
Identifying $\mc{S}(M)\simeq F(M)/\widetilde{TOP}(M)$, we conclude that the map
\[
\pi_k\varinjlim_mB\widetilde{TOP}(\RP^m)\otimes\Q\rightarrow\pi_k\varinjlim_m B\widetilde{TOP}(S^m)
\]
induced by pulling back a block homeomorphism is an isomorphism.

\begin{theorem}\label{thm: main thm C_2}
There is an isomorphism
\[
\pi_k TOP_{C_2}/O_{C_2}\otimes\Q\cong 0.
\]
\end{theorem}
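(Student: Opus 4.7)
The plan is to show that $O_{C_2} \to TOP_{C_2}$ is a rational homotopy equivalence, from which the desired vanishing $\pi_k(TOP_{C_2}/O_{C_2}) \otimes \Q = 0$ follows immediately from the long exact sequence of the fibration.

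Writing $V = W \oplus V^{C_2}$ with $W$ a free $C_2$-representation, I would first apply the Madsen--Rothenberg splitting used in the proof of Proposition \ref{prop: rational homotopy of PL_C_2}, which gives in the stable range
\[
TOP_{C_2}(V) \simeq \widetilde{TOP}_{C_2}(SW) \times TOP(V^{C_2}).
\]
The orthogonal analogue is exact and elementary: since $-\op{id}_W$ is central in $O(W)$, an orthogonal self-map of $V$ is $C_2$-equivariant if and only if it preserves the eigenspace decomposition, so $O_{C_2}(V) \cong O(W) \times O(V^{C_2})$. The inclusion $O_{C_2}(V) \hookrightarrow TOP_{C_2}(V)$ respects these product decompositions, so it suffices to show each factor map is a stable rational equivalence.

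For the trivial-representation factor, $O(V^{C_2}) \to TOP(V^{C_2})$ stabilizes to the classical $O \to TOP$, which is a rational equivalence since $\pi_*(TOP/O)$ is finite. For the free-representation factor, I would use the rational identification $B\widetilde{TOP}_{C_2}(SW) \simeq_{\Q} B\widetilde{TOP}(S^m)$ established in the display immediately preceding the theorem (with $m+1 = \dim W$, induced by pulling back block homeomorphisms along the cover $SW \to \RP^m$). Under this identification, the orthogonal inclusion $BO(W) \to B\widetilde{TOP}_{C_2}(SW)$ corresponds, via the tautological equality $O_{C_2}(W) = O(W) = O(m+1)$, to the non-equivariant map $BO(m+1) \to B\widetilde{TOP}(S^m)$. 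Stably, this latter map is a rational equivalence by the same surgery-exact-sequence analysis carried out for $\RP^m$ above (with the simplification that $\pi_1 S^m = 0$ removes any $L(C_2)$ correction) combined with the classical equivalence $BO \simeq_{\Q} BTOP$.

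The main obstacle is verifying that the rational equivalence $B\widetilde{TOP}_{C_2}(SW) \simeq_{\Q} B\widetilde{TOP}(S^m)$ from the preceding display is compatible with the orthogonal inclusions on each side — that is, that the square with top and bottom edges $BO(W) \to B\widetilde{TOP}_{C_2}(SW)$ and $BO(m+1) \to B\widetilde{TOP}(S^m)$ commutes rationally under the tautological left vertical and the established right vertical. Granting this compatibility, the two factor arguments assemble to yield $\pi_k(TOP_{C_2}/O_{C_2}) \otimes \Q = 0$ for all $k$.
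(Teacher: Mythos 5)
Your proposal is correct and follows essentially the same route as the paper: split off the trivial summand, reduce the free part to the comparison $BO(W)\to B\widetilde{TOP}_{C_2}(SW)$, transfer to the non-equivariant block space over $S^m$ via the rational equivalence $B\widetilde{TOP}(\RP^m)\to B\widetilde{TOP}(S^m)$, and finish with the classical $BO\simeq_\Q BTOP$. The one comment worth making is that the ``main obstacle'' you flag at the end is not actually an obstacle: the square in question commutes on the nose, not merely rationally, because the right vertical map $\widetilde{TOP}_{C_2}(SW)\to\widetilde{TOP}(S^m)$ is literally ``forget the $C_2$-action,'' and on the orthogonal side forgetting the $C_2$-action is the identity under the tautological equality $O_{C_2}(W)=O(W)$ (since $-\op{id}_W$ is central). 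Both composites from $O(W)$ are just $O(W)$ acting by non-equivariant block homeomorphisms of $S^m$, so the compatibility is immediate and the argument closes without any further work.
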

\begin{proof}
We show that the map $BO_{C_2}\rightarrow BTOP_{C_2}$ induces an isomorphism on the rational homotopy groups.
Let $V\cong W\oplus\R^n$ be a $C_2$-representation where $W$ is a direct sum of sign representations and $C_2$ acts trivially on $\R^n$.
Then $BO_{C_2}(V)\cong BO(W)\times BO(\R^n)$ and there is a map
\[
BTOP_{C_2}(V)\rightarrow B\widetilde{TOP}_{C_2}(SW)\times BTOP(\R^n)
\]
which induces isomorphisms on $\pi_k$ for $k\le n$.

By considering the composite $BO(W)\times BO(\R^n)\rightarrow B\widetilde{TOP}_{C_2}(SW)\times BTOP(\R^n)$, it suffices to show that the map $BO(W)\rightarrow B\widetilde{TOP}_{C_2}(SW)$ induces an isomorphism on rational homotopy groups for a cofinal family of representations $W$.
Assume $k\equiv0\text{ mod }4$ and take our cofinal family to be $(m+1)$-copies of the sign representation where $m\equiv1\text{ mod }4$.
Identifying $\widetilde{TOP}_{C_2}(SW)$ with $\widetilde{TOP}(\RP^m)$, the remarks above show that
\[
\pi_kB\widetilde{TOP}(\RP^m)\rightarrow\pi_kB\widetilde{TOP}(S^m)
\]
is rationally an isomorhpism.

It follows that
\[
\pi_kB\widetilde{TOP}_{C_2}(W)\otimes\Q\cong\pi_kB\widetilde{TOP}(\R^{m+1})\otimes\Q
\]
where the map is induced by forgetting the group action.
Since the composite $BO(W)\rightarrow B\widetilde{TOP}(\R^{m+1})$ induces an isomorphism on rational homotopy groups in dimensions at most $2\dim W$, this proves the theorem for $k\equiv0\text{ mod }4$.
The case $k\equiv2\text{ mod }4$ is similar.
\end{proof}

Proposition \ref{prop: reduction to fixed set} and Theorem \ref{thm: main thm C_2} prove Theorem \ref{thm: main even}.

\section{``Topological Invariance'' of Rational Chern Classes}\label{sec: chern classes}
The finiteness of the homotopy groups $\pi_kTOP/O$ implies that $BTOP$ and $BO$ are rationally homotopy equivalent.
It follows that the rational Pontryagin classes of a smooth manifold depend only on the underlying topological manifold.
We apply arguments analogous to those above for the group $G=C_4$ to show that the rational Chern classes of a complex vector bundle depend only on the underlying $C_4$-equivariant topological micro-bundle.
We regard $\C$ as a $C_4$-representation with the generator acting via multiplication by $i$.
Thus, $U=\bigcup_n U(n)=\bigcup_n O_{C_4}(\C^n)$.
Define $TOP^U(n):=TOP_{C_4}(\C^n)$ and $TOP^U:=\bigcup_n TOP^U(n)$.

\begin{definition}\label{def: top almost complex structure}
A \emph{topological almost complex manifold} is a topological manifold $M^{2n}$ with an automorphism of the tangent microbundle $J:\tau M\to\tau M$ such that the following hold:
\begin{itemize}
\item $J^4=\op{Id}$,
\item For each $x\in M$, the fiber is equivariantly homeomorphic to $\C^n$.
\end{itemize}
\end{definition}

Recall that an almost complex manifold is a smooth manifold $M^{2n}$ with a complex reduction of its tangent bundle.
Similarly, a topological almost complex manifold $M^{2n}$ may be regarded as a topological manifold with a $TOP^U(n)$ reduction of its tangent microbundle.
Every almost complex manifold has an underlying topological almost complex manifold.
In this section, we prove the following.

\begin{theorem}\label{thm: BU BTOP U}
The map $BU\to BTOP^U$ induces a surjection on rational cohomology.
\end{theorem}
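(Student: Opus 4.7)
The plan is to adapt the strategy of Theorem \ref{thm: TOP_G/O_G} to the group $C_4$ and the cofinal family of free representations $V = \C^n$ (where the generator of $C_4$ acts by multiplication by $i$). Since $V^{C_4} = 0$, the Madsen--Rothenberg decomposition $PL_{C_4}(V) \simeq \widetilde{PL}_{C_4}(SV) \times PL(V^{C_4})$ used in the proof of Theorem \ref{thm: TOP_G/O_G} simplifies to $PL_{C_4}(\C^n) \simeq \widetilde{PL}_{C_4}(S\C^n)$. Combined with the fact that $PL_{C_4} \to TOP_{C_4}$ is rationally an equivalence (cf.\ \cite{MadsenRothenbergEquivAut}), the map that must be analyzed is the stabilization of
\[
BU(n) = BO_{C_4}(\C^n) \to B\widetilde{PL}_{C_4}(S\C^n).
\]

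I would then factor through $B\widetilde{SPL}_{C_4}(S\C^n)$ and establish a $C_4$-analogue of the Cappell--Weinberger splitting
\[
B\widetilde{SPL}_{C_4}(S\C^n)_{(0)} \simeq B\widetilde{SPL}(S\C^n)_{(0)} \times \tilde{L}(C_4)_{(0)}.
\]
Although \cite{CappellWeinbergerSimpleAS} is invoked in Section 3 for odd primes, the quotient $S\C^n/C_4$ is a lens space and the underlying structure-set splitting should extend verbatim to $C_4$. Under this splitting, the composite $BU(n) \to B\widetilde{SPL}(S\C^n)$ rationally records the Pontryagin classes of the underlying real bundle, while the composite $BU(n) \to \tilde{L}(C_4)_{(0)}$ records the equivariant $G$-signature, which by the Atiyah--Singer formula is an explicit expression in the Chern classes of the $C_4$-eigenbundle decomposition of $E$.

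The main obstacle is the resulting detection step: one must verify that Pontryagin classes together with the $G$-signature data suffice to recover all Chern classes of $E$ rationally. This reduces to a $C_4$-analogue of Ewing's linear independence result (Theorem \ref{thm: Ewing}), requiring that the coefficients appearing in the $C_4$-signature formula be $\Q$-linearly independent in $\Q(i)$. The arithmetic of $\Q(i)/\Q$ is clean and no exceptional Ewing-type degeneracy is expected, so this step should be tractable, but it does require a fresh computation analogous to Ewing's. Once each $c_k \in H^{2k}(BU; \Q)$ is exhibited as a pullback of a topological characteristic class, passing to the colimit in $n$ yields the desired surjection on rational cohomology, and in fact establishes a rational equivalence $BU \simeq_{\Q} BTOP^U$.
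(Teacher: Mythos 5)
Your proposal has a genuine gap at the very first step. The decomposition $PL_{C_p}(V)\simeq\widetilde{PL}_{C_p}(SW)\times PL(V^{C_p})$ that you borrow from the proof of Theorem \ref{thm: TOP_G/O_G} is only $(\dim V^{C_p})$-connected (it comes from \cite[Theorem 2.5]{MadsenRothenbergEquivAut}). When you take $V=\C^n$ with $C_4$ acting freely, $\dim V^{C_4}=0$, so that comparison map carries no information and you cannot conclude $PL_{C_4}(\C^n)\simeq\widetilde{PL}_{C_4}(S\C^n)$, even rationally. Indeed, the relation between the fiberwise group $TOP^U(n)$ and the block group $\widetilde{TOP}_{C_4}(S\C^n)$ is precisely what the paper cannot resolve, and the question ``What is the fiber of $BTOP^U\to B\widetilde{TOP}^U$?'' is left open at the end of the section. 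The paper's workaround is to stabilize by trivial summands: one forms $BTOP^{U'}:=\varinjlim_{n,\ell}BTOP_{C_4}(\C^n\oplus\R^\ell)$, where the trivial $\R^\ell$ factor makes the fiberwise-to-block comparison highly connected. The map $BU\to BTOP^U$ factors through $BU\to BTOP^U\to BTOP^{U'}$, and surjectivity on $H^*(-;\Q)$ can be read off the longer composite; that reduction is the essential idea your proposal is missing, and without it the argument does not get off the ground.

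Two smaller points. First, you route through $PL_{C_4}$ and then invoke a rational $PL\to TOP$ equivalence, but that step was justified in the paper via \cite[Corollary 1.2]{MadsenRothenbergEquivAut} for \emph{odd} $p$; since $C_4$ has even order the paper instead works directly with $TOP$. Second, the ``fresh Ewing-type computation'' you anticipate is both simpler and already available: since $\C^n$ is an isotypic $C_4$-representation there is no linear-independence question, only the nonvanishing of a single scalar $\Phi_m$ in the $G$-signature formula, which is \cite[Corollary 3.5, Proposition 3.6]{EwingSemifree}. Finally, your claimed conclusion that $BU\simeq_\Q BTOP^U$ is stronger than what this argument yields (or what the paper proves); one obtains only a surjection on rational cohomology, and the injectivity question is exactly what remains open.
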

\begin{proof}
We are interested in the stabilization of the diagram
\[
\begin{tikzpicture}[scale=2]
\node (A) at (0,1) {$BU(n)$};\node (B) at (2,1) {$BU(n)\times BO(\ell)$};
\node (C) at (0,0) {$BTOP^U(n)$};\node (D) at (2,0) {$BTOP_{C_4}(\C^n\oplus\R^\ell)$};
\path[->] (A) edge (B) (A) edge (C) (B) edge (D) (C) edge (D);
\end{tikzpicture}.
\]
Let $BTOP^{U'}:=\varinjlim_{n,\ell}BTOP_{C_4}(\C^n\oplus\R^{\ell})$ and let $B\widetilde{TOP}^{U}:=\varinjlim_{n}B\widetilde{TOP}_{C_4}(\C^n)$.
It suffices to show that the induced map
\[
H^*(BTOP^{U'};\Q)\to H^*(BU;\Q)
\]
is surjective.
As before, there are maps
\[
BTOP_{C_4}(\C^n\oplus\R^{\ell})\to BTOP_{C_4}(\C^n\oplus\R^{\ell},\R^{\ell})\times BTOP(\R^{\ell})\to B\widetilde{TOP}_{C_4}(S(\C^n))\times BTOP(\R^{\ell}).
\]

As in the odd order case, the map
\[
B\widetilde{TOP}(S(\C^n)/C_4)\to B\widetilde{TOP}(S(\C^n))\times \tilde{L}(C_4)_{(0)}
\]
induces an isomorphism on rational homotopy groups.
The composition $BU(n)\to\tilde{L}(C_4)_{(0)}$ may be described by the Atiyah--Singer $G$-signature theorem.
Specifically, if $E\in\pi_{2m}BU(n)$ is a complex vector bundle over $S^{2m}$, the corresponding element in $\pi_{2m}\tilde{L}(C_4)_{(0)}\cong\Q$ is $\Phi_mc_m(E)$ where $\Phi_m\in\Q$ or $i\Q$ (according to the parity of $m$).
By \cite[Corollary 3.5]{EwingSemifree} and \cite[Proposition 3.6]{EwingSemifree} the coefficients $\Phi_m$ are nonzero.

Stabilizing, $BU\to BTOP^{U'}$ may be identified with a map $BU\to B\widetilde{TOP}^U\times BTOP$.
The analysis of the Atiyah--Singer formula above shows that the composition $BU\to B\widetilde{TOP}^U\times BTOP\to \tilde{L}(C_4)$ is an isomorphism on rational homotopy groups, which suffices to prove the theorem.
\end{proof}

\begin{cor}\label{cor: Top invariance of Chern}
Suppose $M$ is a topological almost complex manifold.
Then every almost complex manifold with underlying topological almost complex manifold $M$ has the same rational Chern classes.
\end{cor}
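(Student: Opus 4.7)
The plan is to interpret an almost complex structure on a smooth manifold as a lift of the classifying map of its underlying topological almost complex structure, and then exploit the surjection on rational cohomology provided by Theorem \ref{thm: BU BTOP U}.

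First I would set up the lifting picture. An almost complex manifold structure on $M^{2n}$ is equivalent to a reduction of the structure group of the tangent bundle to $U(n)$, i.e.\ a classifying map $f:M\to BU$ for the (stable) tangent bundle. A topological almost complex structure on $M$, in the sense of \refdef{def: top almost complex structure}, is a $TOP^U$-reduction of the tangent microbundle, i.e.\ a classifying map $g:M\to BTOP^U$. If $J$ is an almost complex structure on a smooth manifold whose underlying topological almost complex structure is $g$, then $g$ is homotopic to $\pi\circ f$, where $\pi:BU\to BTOP^U$ is the canonical map.

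Next I would invoke Theorem \ref{thm: BU BTOP U} to conclude that the induced map
\[
\pi^*:H^*(BTOP^U;\Q)\longrightarrow H^*(BU;\Q)
\]
is surjective. Since the rational Chern classes $c_i\in H^{2i}(BU;\Q)$ span (as an algebra) the target, we may choose, for each $i$, a class $\tilde c_i\in H^{2i}(BTOP^U;\Q)$ with $\pi^*\tilde c_i=c_i$. Now suppose $(M,J_1)$ and $(M,J_2)$ are two almost complex manifolds with the same underlying topological almost complex manifold $(M,g)$, and let $f_1,f_2:M\to BU$ be their classifying maps. Then $\pi\circ f_1\simeq g\simeq\pi\circ f_2$, so
\[
f_j^*c_i=f_j^*\pi^*\tilde c_i=(\pi\circ f_j)^*\tilde c_i=g^*\tilde c_i
\]
for $j=1,2$, and in particular $f_1^*c_i=f_2^*c_i$ in $H^{2i}(M;\Q)$.

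There is essentially no obstacle here beyond Theorem \ref{thm: BU BTOP U} itself; the corollary is a formal consequence of the surjectivity statement together with the fact that the Chern classes generate $H^*(BU;\Q)$. The only mild subtlety to verify is that the class $g^*\tilde c_i$ is well defined independently of the choice of lift $\tilde c_i$ once $g$ is fixed, but any two choices differ by an element of $\ker\pi^*$, which pulls back to zero under $f_j$ as well.
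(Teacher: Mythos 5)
Your proposal is correct and takes essentially the same approach as the paper, whose entire proof consists of pointing to Theorem \ref{thm: BU BTOP U} and the commutative triangle $M\to BU\to BTOP^U$; your write-up simply makes the standard lifting-and-pullback argument explicit, including the correct observation that $f_j^*c_i=g^*\tilde c_i$ depends only on the fixed map $g:M\to BTOP^U$.
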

\begin{proof}
This follows from Theorem \ref{thm: BU BTOP U} and from considering the diagram
\[
\begin{tikzpicture}[scale=2]
\node (A) at (0,0) {$M$};\node (B) at (2,1) {$BU$};\node (C) at (2,0) {$BTOP^U$};
\path[->] (A) edge (B) (A) edge (C) (B) edge (C);
\end{tikzpicture}.
\]
\end{proof}

Without the allowing ourselves to stabilize by adding trivial representations, it is difficult to relate the fiberwise classifying space to the block classifying space.
We end with the following question.

\begin{question}
What is the fiber of $BTOP^U\to B\widetilde{TOP}^U$?
\end{question}
\bibliographystyle{alpha}
\bibliography{StableZpSmoothing.bib}
\end{document}